\newtheorem{theorem}{Theorem}
\newtheorem{corollary}[theorem]{Corollary}
\newtheorem{lemma}[theorem]{Lemma}
\theoremstyle{definition}
\theoremstyle{remark}
\numberwithin{equation}{section}
\numberwithin{theorem}{section}
\numberwithin{defn}{section}
\begin{document}
\title[On Some Double Nahm Sums of Zagier]
 {On Some Double Nahm Sums of Zagier}

\author{Zhineng Cao, Hjalmar Rosengren and Liuquan Wang}

\address{School of Mathematics and Statistics, Wuhan University, Wuhan 430072, Hubei, People's Republic of China}
\email{zhncao@whu.edu.cn}

\address{Mathematical Sciences, Chalmers University of Technology and University of Gothenburg, SE-41296 Gothenburg, Sweden}
\email{hjalmar@chalmers.se}

\address{School of Mathematics and Statistics, Wuhan University, Wuhan 430072, Hubei, People's Republic of China}
\email{wanglq@whu.edu.cn;mathlqwang@163.com}

\subjclass[2010]{11P84, 33D15, 33D45}

\keywords{Nahm sums; Rogers--Ramanujan identities; sum-product identities; constant term method}


\begin{abstract}
Zagier provided eleven conjectural rank two examples for Nahm's problem. All of them have been proved in the literature except for the fifth example, and there is no $q$-series proof for the tenth example. We prove that the fifth and the tenth examples are in fact equivalent. Then we give a $q$-series proof for the fifth example, which confirms a recent conjecture of Wang. This also serves as the first $q$-series proof for the tenth example, whose explicit form was conjectured by Vlasenko and Zwegers in 2011.
\end{abstract}

\maketitle

\section{Introduction and Main Results}
In his 1894 paper, Rogers \cite{Rogers1894} discovered many sum-product $q$-series identities. The most famous of these are
\begin{equation}
\sum_{n=0}^\infty \frac{q^{n^2}}{(q;q)_n}=\frac{1}{(q,q^4;q^5)_\infty}, \qquad
\sum_{n=0}^\infty \frac{q^{n(n+1)}}{(q;q)_n}=\frac{1}{(q^2,q^3;q^5)_\infty}. \label{RR}
\end{equation}
Here and in the whole paper we use standard $q$-series notation:
\begin{align}
(a;q)_0:=1, \quad (a;q)_n:=\prod\limits_{k=0}^{n-1}(1-aq^k), \quad (a;q)_\infty :=\prod\limits_{k=0}^\infty (1-aq^k), \\
(a_1,\cdots,a_m;q)_n:=(a_1;q)_n\cdots (a_m;q)_n, \quad n\in \mathbb{N}\cup \{\infty\}.
\end{align}
The identities \eqref{RR}  are called the Rogers--Ramanujan identities since they were rediscovered by Ramanujan some time before 1913. They have inspired a lot of work on finding identities of similar type. 
Rogers--Ramanujan-type identities serve as one of the witnesses for deep connections between the theory of $q$-series and modular forms. After multiplying with suitable powers of $q$, the right-hand sides of \eqref{RR} become modular forms. This fact is not clear from the sum sides. 

In general, it can be very hard to determine if a given $q$-hypergeometric series is modular or not. Motivated by applications in conformal field theory, Nahm 
 \cite{Nahm} suggested to study this question for a special class of multiple series. More precisely, 
given a positive integer $r$, Nahm's problem is to determine all $r\times r$ positive definite rational matrices $A$,  rational vectors $B$ of length $r$ and rational scalars $C$ such that
\begin{align}
f_{A,B,C}(q):=\sum_{n=(n_1,\dots,n_r)^\mathrm{T} \in (\mathbb{Z}_{\geq 0})^r} \frac{q^{\frac{1}{2}n^\mathrm{T} An+n^\mathrm{T} B+C}}{(q;q)_{n_1}\cdots (q;q)_{n_r}}
\end{align}
is a modular form. The series $f_{A,B,C}(q)$ is usually referred to as a Nahm sum, and $(A,B,C)$ is called a modular triple when $f_{A,B,C}(q)$ is modular.
 
When the rank $r=1$, Zagier \cite{Zagier} proved that there are exactly seven modular triples, including two that correspond to  the Rogers--Ramanujan identities \eqref{RR}. When the rank $r=2$, he made an extensive computer search that led to  eleven sets of possible modular triples recorded as Table 2 in \cite{Zagier}. 
All except one of these examples have been confirmed  by Zagier \cite{Zagier}, Vlasenko and Zwegers \cite{VZ}, 
Cherednik and Feigin \cite{Feigin},
Lee \cite{LeeThesis}, Calinescu, Milas and Penn \cite{CMP}, and Wang \cite{Wang-rank2}. A comprehensive discussion of all the eleven examples can be found in Wang's work \cite{Wang-rank2}. Following it, we label the examples in Zagier's list \cite[Table 2]{Zagier} from 1 to 11 according to their order of appearance.

Among these examples the fifth and the tenth are quite special. Until now, all of the eleven examples have been proved except for Example 5, which corresponds to
$$A=\begin{pmatrix} 1/3 & -1/3 \\ -1/3 & 4/3 \end{pmatrix}, \quad B\in \left\{
\begin{pmatrix} -1/6 \\ 2/3 \end{pmatrix},  \begin{pmatrix} 1/2 \\ 0 \end{pmatrix}\right\}.$$
Wang \cite{Wang-rank2} conjectured explicit modular product representations for the Nahm sums in this example, namely \cite[Conjecture 3.5]{Wang-rank2},
\begin{subequations}\label{exam5-wang}
\begin{align}
&S_1(q):=\sum_{i,j\geq 0}\frac{q^{\frac{1}{2}i^2-ij+2j^2-\frac{1}{2}i+2j}}{(q^3;q^3)_i(q^3;q^3)_j}  \nonumber \\
&=3\frac{(q^6;q^6)_\infty (q^{18},q^{27},q^{45};q^{45})_\infty}{(q^3;q^3)_\infty^2} -\frac{(q^4,q^6,q^{10};q^{10})_\infty(q,q^5,q^{11},q^{19},q^{25},q^{29};q^{30})_\infty}{(q^3;q^3)_\infty (q^3,q^{27};q^{30})_\infty},\label{exam5-1} \\
&S_2(q):=\sum_{i,j\geq 0}\frac{q^{\frac{1}{2}i^2-ij+2j^2+\frac{3}{2}i}}{(q^3;q^3)_i (q^3;q^3)_j} \nonumber \\
&=3q^2\frac{(q^6;q^6)_\infty (q^9,q^{36},q^{45};q^{45})_\infty}{(q^3;q^3)_\infty^2}
+\frac{(q^2,q^8,q^{10};q^{10})_\infty (q^5,q^7,q^{13},q^{17},q^{23},q^{25};q^{30})_\infty}{(q^3;q^3)_\infty (q^9,q^{21};q^{30})_\infty}. \label{exam5-2}
\end{align}
\end{subequations}

Example 10 is also special because until now there has been no $q$-series proof for it. For all other examples, the modularity was
confirmed by summing the series explicitly in terms of infinite products. 
By contrast, for Example 10, Cherednik and Feigin used nilpotent double affine Hecke algebras to 
prove the modularity of the sum side directly \cite[p.\ 1074]{Feigin}. 
The matrix and vector parts for this example are
$$A=\begin{pmatrix} 4/3 & 2/3 \\ 2/3 & 4/3 \end{pmatrix}, \quad
B\in \left\{\begin{pmatrix} -2/3 \\ -1/3 \end{pmatrix}, \begin{pmatrix}  -1/3 \\ -2/3 \end{pmatrix},  \begin{pmatrix} 0 \\ 0 \end{pmatrix}\right\}.$$
Since the $(1,1)$ and $(2,2)$ entries of $A$ are the same,  there are essentially only two Nahm sums involved. Vlasenko and Zwegers \cite{VZ} made the following conjecture for these Nahm sums:
\begin{subequations}
    \label{exam10-id}
\begin{align}
&S_3(q):=\sum_{i,j\geq 0} \frac{q^{2i^2+2ij+2j^2-2i-j}}{(q^3;q^3)_i(q^3;q^3)_j} \nonumber \\
&=\frac{(q^{45};q^{45})_\infty}{(q^3;q^3)_\infty} \left(2(q^{18},q^{27};q^{45})_\infty+q(q^{12},q^{33};q^{45})_\infty+q^4(q^3,q^{42};q^{45})_\infty\right), \label{exam10-1} \\
&S_4(q):=\sum_{i,j\geq 0} \frac{q^{2i^2+2ij+2j^2}}{(q^3;q^3)_i(q^3;q^3)_j} \nonumber \\
&=\frac{(q^{45};q^{45})_\infty}{(q^3;q^3)_\infty} \left((q^{21},q^{24};q^{45})_\infty -q^3(q^6,q^{39};q^{45})_\infty+2q^2(q^9,q^{36};q^{45})_\infty \right). \label{exam10-2}
\end{align}
\end{subequations}
These identities follow  from the work of
 Cherednik and Feigin, since knowing that both sides have the same modular behaviour it is straight-forward to verify that they agree.
 However, it remains to find direct proofs, using $q$-series techniques.

The main object of the present paper is to provide $q$-series proofs for Examples 5 and 10. Surprisingly, we will show  that these two examples are equivalent in the sense that they can be converted to each other. To make such conversions, we establish the following transformation formula.
\begin{theorem}\label{thm-general}
For $(a_k)_{k=-\infty}^\infty$ a general sequence such that the series below converge absolutely,
\begin{align}\label{eq-general}
\sum_{i,j\geq 0} \frac{a_{i-j}q^{\frac{j(j-1)}{2}}x^j}{(q;q)_i(q;q)_j}
=(-x;q)_\infty \sum_{i,j\geq 0} \frac{a_{i-j}q^{\frac{j(j-1)}{2}+ij} x^j}{(q;q)_i(q;q)_j(-x;q)_j}.
\end{align}
\end{theorem}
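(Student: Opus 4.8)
The plan is to detach the free sequence $(a_k)$ from both sides of \eqref{eq-general}, reduce the resulting sequence-free identity to a terminating $q$-binomial identity, and settle that identity by a one-line induction. Since both series in \eqref{eq-general} converge absolutely, I may rearrange them freely. Collecting the terms on the left-hand side according to the value $n:=i-j\in\mathbb{Z}$ of the index of $a$ gives $\sum_{n\in\mathbb{Z}}a_n\,\Phi_n(x)$, where
\[
\Phi_n(x):=\sum_{\substack{i,j\ge 0\\ i-j=n}}\frac{q^{j(j-1)/2}x^j}{(q;q)_i(q;q)_j}.
\]
For the right-hand side I would use $(-x;q)_\infty=(-x;q)_j\,(-xq^j;q)_\infty$ to cancel the denominator factor $(-x;q)_j$, then expand $(-xq^j;q)_\infty=\sum_{k\ge 0}q^{k(k-1)/2}(xq^j)^k/(q;q)_k$ by Euler's identity; interchanging summations in the (absolutely convergent) resulting triple sum and again collecting by $n=i-j$ turns the right-hand side into $\sum_{n\in\mathbb{Z}}a_n\,\Psi_n(x)$, where
\[
\Psi_n(x):=\sum_{\substack{i,j\ge 0\\ i-j=n}}\ \sum_{k\ge 0}\frac{q^{\,j(j-1)/2+ij+k(k-1)/2+jk}\,x^{j+k}}{(q;q)_i(q;q)_j(q;q)_k}.
\]
Hence it suffices to prove $\Phi_n=\Psi_n$ for every $n\in\mathbb{Z}$.

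To this end I would compare coefficients of $x^m$ ($m\ge 0$), throughout adopting the convention $1/(q;q)_\ell=0$ for $\ell<0$. One has $[x^m]\Phi_n=q^{m(m-1)/2}/\big((q;q)_{n+m}(q;q)_m\big)$, whereas the substitution $i=n+j$, $k=m-j$ gives
\[
[x^m]\Psi_n=\sum_{j=0}^{m}\frac{q^{\,e(j)}}{(q;q)_{n+j}(q;q)_j(q;q)_{m-j}},\qquad e(j):=\binom{j}{2}+(n+j)j+\binom{m-j}{2}+j(m-j).
\]
A direct computation collapses this quadratic exponent to $e(j)=\binom{m}{2}+j^2+nj$, so that $\Phi_n=\Psi_n$ is equivalent to
\[
\frac{1}{(q;q)_{n+m}(q;q)_m}=\sum_{j=0}^{m}\frac{q^{\,j^2+nj}}{(q;q)_{n+j}(q;q)_j(q;q)_{m-j}}\qquad (n\in\mathbb{Z},\ m\ge 0).
\]
When $n+m<0$ both sides vanish, and when $n+m\ge 0$ one clears denominators (using $(q;q)_{n+m}/(q;q)_{n+j}=(q^{n+j+1};q)_{m-j}$, valid for $0\le j\le m$ with the above convention) to arrive at the terminating identity
\begin{equation}\label{eq-qbinom-prop}
\sum_{j=0}^{m}\binom{m}{j}_{q}\,q^{\,j^2+nj}\,(q^{\,n+j+1};q)_{m-j}=1.
\end{equation}

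It remains to prove \eqref{eq-qbinom-prop} for all integers $n$ and $m\ge 0$. I would induct on $m$: the case $m=0$ is immediate, and for the inductive step the $q$-Pascal recurrence $\binom{m}{j}_{q}=\binom{m-1}{j}_{q}+q^{m-j}\binom{m-1}{j-1}_{q}$ splits the left-hand side of \eqref{eq-qbinom-prop} into $(1-q^{n+m})$ times the instance $(n,m-1)$ — after extracting the factor $1-q^{n+m}$ from $(q^{n+j+1};q)_{m-j}$ — plus, after the shift $j\mapsto j+1$, $q^{n+m}$ times the instance $(n+1,m-1)$; by the induction hypothesis these add up to $(1-q^{n+m})+q^{n+m}=1$. (Alternatively, the left-hand side equals $(q^{n+1};q)_m\cdot{}_1\phi_1\big(q^{-m};q^{n+1};q,q^{\,n+m+1}\big)$, which the classical evaluation ${}_1\phi_1(a;c;q,c/a)=(c/a;q)_\infty/(c;q)_\infty$ sums to $(q^{n+1};q)_m/(q^{n+1};q)_m=1$.)

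The only genuinely substantive ingredient is \eqref{eq-qbinom-prop}, exposed by the exponent collapse $e(j)=\binom{m}{2}+j^2+nj$ that reduces the theorem to it; the remaining steps — rearranging the absolutely convergent series and tracking the index conventions ($i,j\ge 0$ and $1/(q;q)_\ell=0$ for $\ell<0$) — are routine, and I do not foresee them causing difficulty.
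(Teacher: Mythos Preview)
Your proof is correct and rests on the same two ingredients as the paper's: Euler's expansion of $(-xq^j;q)_\infty$ and the $q$-Chu--Vandermonde identity --- your terminating identity \eqref{eq-qbinom-prop} is, after the substitution $i=n+m$, $j=m$, $k=m-j$, exactly the paper's formula $\dfrac{1}{(q;q)_i(q;q)_j}=\sum_{k}\dfrac{q^{(i-k)(j-k)}}{(q;q)_k(q;q)_{i-k}(q;q)_{j-k}}$. The only structural difference is the direction of travel: the paper starts from the left-hand side, applies the cited identity, shifts indices, and collapses the extra sum by Euler, whereas you start from the right-hand side, expand by Euler, reduce by coefficient comparison to the same identity, and then re-prove that identity from scratch by induction. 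Your route is a bit longer but has the merit of being self-contained (no citation needed for $q$-Chu--Vandermonde); the paper's route is shorter but relies on recognizing the key lemma up front.
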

We will only use the special case when $a_{k}=q^{ak^2+bk}$ and $x=q$, where we need $a>0$ for convergence. It takes the form
\begin{equation}
\sum_{i,j\geq 0} \frac{q^{a(i-j)^2+b(i-j)+\frac{j(j+1)}{2}}}{(q;q)_i(q;q)_j}
=(-q;q)_\infty \sum_{i,j\geq 0} \frac{q^{a(i-j)^2+b(i-j)+\frac{j(j+1)}{2}+ij}}{(q;q)_i(q^2;q^2)_j}. \label{index-change}
\end{equation}
Using \eqref{index-change}
and the constant term method \cite[\S 4]{Andrews86}, we find that the Nahm sums in Examples 5 and 10 differ only by a simple factor.

\begin{theorem}\label{thm-exam-relation}
We have
\begin{equation}S_1(q)=(-q^3;q^3)_\infty S_3(q), \quad S_2(q)=(-q^3;q^3)_\infty S_4(q). \label{relation-1}\end{equation}
\end{theorem}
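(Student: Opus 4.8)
The plan is to derive \eqref{relation-1} by combining the transformation \eqref{index-change} with the constant term method of Andrews. The transformation will be used to peel the factor $(-q^3;q^3)_\infty$ off the Example~5 sums $S_1,S_2$, and the constant term method will then identify what remains with the Example~10 sums $S_3,S_4$.

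For the first step I replace $q$ by $q^3$ in \eqref{index-change}, so that its left-hand exponent becomes $3\big(a(i-j)^2+b(i-j)\big)+\tfrac32 j(j+1)$. The exponent of $S_1$ can be rewritten as
\[
\tfrac12 i^2-ij+2j^2-\tfrac12 i+2j=\tfrac12(i-j)^2-\tfrac12(i-j)+\tfrac32 j(j+1),
\]
which has this shape with $(a,b)=(\tfrac16,-\tfrac16)$; hence \eqref{index-change} gives
\[
S_1(q)=(-q^3;q^3)_\infty\sum_{i,j\geq0}\frac{q^{\frac12 i^2+2ij+2j^2-\frac12 i+2j}}{(q^3;q^3)_i\,(q^6;q^6)_j}.
\]
In the same way, the exponent of $S_2$ equals $\tfrac12(i-j)^2+\tfrac32(i-j)+\tfrac32 j(j+1)$, corresponding to $(a,b)=(\tfrac16,\tfrac12)$, and we obtain
\[
S_2(q)=(-q^3;q^3)_\infty\sum_{i,j\geq0}\frac{q^{\frac12 i^2+2ij+2j^2+\frac32 i}}{(q^3;q^3)_i\,(q^6;q^6)_j}.
\]
Thus \eqref{relation-1} is equivalent to the pair of identities
\[
\sum_{i,j\geq0}\frac{q^{\frac12 i^2+2ij+2j^2-\frac12 i+2j}}{(q^3;q^3)_i(q^6;q^6)_j}=S_3(q),
\qquad
\sum_{i,j\geq0}\frac{q^{\frac12 i^2+2ij+2j^2+\frac32 i}}{(q^3;q^3)_i(q^6;q^6)_j}=S_4(q).
\]

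It remains to prove these two identities, and this is where the constant term method enters. The difficulty is that the $i$-part of the exponent on the left is $\tfrac12 i^2-\tfrac12 i=\binom i2$, which is a power of $q$ but not of $q^3$, so the sum over $i$ is not summable directly by the $q$-binomial theorem (on the $S_3$-side the $i$-part is $2i(i-1)=4\binom i2$, with the same feature). I would therefore use Cauchy's identities $\sum_{n\geq0}z^n/(q^3;q^3)_n=1/(z;q^3)_\infty$ and $\sum_{n\geq0}q^{3\binom n2}z^n/(q^3;q^3)_n=(-z;q^3)_\infty$ together with the Jacobi triple product to write the offending factor $q^{\binom i2}/(q^3;q^3)_i$ (respectively $q^{4\binom i2}/(q^3;q^3)_i$) as the constant term in an auxiliary variable $z$ of a product of infinite products whose bases are $q^3$ and $q$. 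After inserting this representation the summation over $i$ becomes geometric and can be carried out; the summation over $j$ is then evaluated by the $q$-binomial theorem (using $(q^6;q^6)_j=(q^3;q^3)_j(-q^3;q^3)_j$ on the left-hand side); and the two sides collapse to the same constant term in $z$. The main obstacle is exactly this collapse: one has to make the right choice of constant-term representation and regroup the quadratic exponent so that the $j$-sum telescopes into a closed product, and the simultaneous presence of the bases $q^3$ and $q^6$ makes that bookkeeping the delicate point. Finally, the two identities in \eqref{relation-1} are handled in parallel, since $S_1$ and $S_2$ (equivalently $S_3$ and $S_4$) differ only in the linear parts of their exponents, which do not affect the argument.
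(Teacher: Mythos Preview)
Your first step---applying \eqref{index-change} with $q\mapsto q^3$ and $(a,b)=(\tfrac16,-\tfrac16)$ or $(\tfrac16,\tfrac12)$---is exactly what the paper does, and your two intermediate identities
\[
\sum_{i,j\ge 0}\frac{q^{\frac12 i^2+2ij+2j^2-\frac12 i+2j}}{(q^3;q^3)_i(q^6;q^6)_j}=S_3(q),\qquad
\sum_{i,j\ge 0}\frac{q^{\frac12 i^2+2ij+2j^2+\frac32 i}}{(q^3;q^3)_i(q^6;q^6)_j}=S_4(q)
\]
are precisely what remains to be shown.

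The gap is in your execution of the constant term step. You propose to represent the ``offending factor'' $q^{\binom i2}/(q^3;q^3)_i$ alone via a Jacobi theta in an auxiliary variable $z$, then sum over $i$ geometrically, then sum over $j$ by the $q$-binomial theorem. But the cross term $2ij$ forces the $i$-sum to produce a factor like $1/(-q^{2j}z^{-1};q^3)_\infty$, whose $j$-dependence enters through $q^{2j}$---not a power of $q^3$ or $q^6$. The remaining $j$-sum
\[
\sum_{j\ge 0}\frac{q^{2j^2+2j}}{(q^6;q^6)_j\,(-q^{2j}z^{-1};q^3)_\infty}
\]
is \emph{not} summable by Euler or the $q$-binomial theorem, so your plan stalls exactly where you flag it as ``delicate.'' The same obstruction arises on the $S_3$ side with $q^{4\binom i2}$.

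The missing observation is structural: the entire quadratic part on the left satisfies
\[
\tfrac12 i^2+2ij+2j^2-\tfrac12 i-j=\binom{i+2j}{2},
\]
so it depends only on $i+2j$. A \emph{single} Jacobi theta $\theta(1/z;q)$ then absorbs the whole quadratic, and both the $i$-sum and the $j$-sum become pure Euler sums (bases $q^3$ and $q^6$), yielding
\[
\mathrm{CT}\,\frac{\theta(1/z;q)}{(-z;q^3)_\infty(q^3z^2;q^6)_\infty}
=\mathrm{CT}\,\frac{(z;q^3)_\infty\,\theta(1/z;q)}{(z^2;q^3)_\infty}.
\]
Re-expanding the right-hand expression (second Euler for the numerator, first Euler for the denominator, Jacobi for the theta) and extracting the constant term gives exactly $S_3(q)$; the $S_4$ case is parallel. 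So the constant term method is indeed the right tool, but it must be applied to the combination $i+2j$, not to $i$ alone.
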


In light of this theorem, Examples 5 and 10 are equivalent and it suffices to prove only one of them. We will give a $q$-series proof for Example 5, which leads to the following result.

\begin{theorem}\label{thm-exam5}
We have
\begin{subequations}\label{exam5-id}
\begin{align}
S_1(q)&=\frac{(q^4;q^4)_\infty}{(q^3;q^3)_\infty (q^6;q^{12})_\infty}
\left(2\frac{(-q^4;q^4)_\infty^2}{(q^{24},q^{96};q^{120})_\infty} +q\frac{(-q^2;q^4)_\infty^2}{(q^{12},-q^{18};-q^{30})_\infty} \right), \label{exam5-id-1} \\
S_2(q)&=\frac{(q^4;q^4)_\infty}{(q^3;q^3)_\infty (q^6;q^{12})_\infty} \left(\frac{(-q^2;q^4)_\infty^2}{(-q^6,q^{24};-q^{30})_\infty} +2q^5\frac{(-q^4;q^4)_\infty^2}{(q^{48},q^{72};q^{120})_\infty}  \right). \label{exam5-id-2}
\end{align}
\end{subequations}
\end{theorem}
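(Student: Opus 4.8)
The plan is to prove \eqref{exam5-id-1} and \eqref{exam5-id-2} in parallel, since they differ only through the choice of $B$; I describe the argument for $S_1$ and indicate the (entirely analogous) changes for $S_2$.

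\emph{Step 1 (reduce to a single outer sum).} I apply the transformation \eqref{index-change} with $q$ replaced by $q^3$ and with $(a,b)=\bigl(\tfrac16,-\tfrac16\bigr)$, chosen so that its left-hand side is exactly the series defining $S_1(q)$; the convergence hypothesis is satisfied because $a=\tfrac16>0$. (One could instead start from Theorem \ref{thm-exam-relation} and work with $S_3$, but the real point of \eqref{index-change} here is that it replaces the anisotropic form $2(i^2+ij+j^2)$ attached to $S_3$, which does not decouple, by the reducible form $\tfrac12(i+2j)^2$.) Since $\tfrac12 i^2+2ij+2j^2=\tfrac12(i+2j)^2$, the substitution $m=i+2j$ together with the elementary identity $(q^3;q^3)_{2j}=(q^3;q^6)_j(q^6;q^6)_j$ turns the transformed series into
\[
S_1(q)=(-q^3;q^3)_\infty\sum_{m\ge 0}\frac{q^{\binom{m}{2}}}{(q^3;q^3)_m}\sum_{j=0}^{\lfloor m/2\rfloor}\binom{m}{2j}_{q^3}(q^3;q^6)_j\,q^{3j},
\]
and the companion identity for $S_2$ is obtained by replacing $q^{\binom{m}{2}}$ with $q^{m(m+3)/2}$ and $q^{3j}$ with $q^{-3j}$.

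\emph{Step 2 (collapse the double series to a theta series).} The inner sum over $j$ is a terminating ${}_2\phi_1$-type sum; I would either evaluate it by a finite $q$-Chu--Vandermonde / $q$-binomial summation or recognise the resulting pair as a Bailey pair, and then run the constant term method of \cite[\S 4]{Andrews86} on the outer sum --- representing $1/(q^3;q^3)_m$ and the $q$-binomial coefficient as constant terms, completing the square in $m$, and using the Jacobi triple product to trade the Gaussian for a bilateral theta series. The output should be $S_1(q)$ written as a fixed infinite product times a short $\mathbb{Z}[q]$-linear combination of theta quotients, and similarly for $S_2$.

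\emph{Step 3 (match the product side).} It remains to recognise that combination as the right-hand side of \eqref{exam5-id-1}. Because of the $q^3$-bases, the natural step is a dissection modulo $3$, after which repeated use of the Jacobi triple product identity (and, where needed, the quintuple product identity) and standard theta addition formulas regroups the pieces into the stated form; the same bookkeeping produces \eqref{exam5-id-2}. Comparing \eqref{exam5-id} with \eqref{exam5-wang} through one further chain of triple-product identities then confirms \cite[Conjecture 3.5]{Wang-rank2}, and combining with Theorem \ref{thm-exam-relation} yields the first $q$-series proof of \eqref{exam10-id}.

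I expect Step 3 to be the main obstacle, with the inner-sum evaluation in Step 2 close behind. Even once Step 2 delivers a clean theta series, reaching the precise form \eqref{exam5-id} requires the right dissection and the right order of triple- and quintuple-product manipulations: the coexistence of the three genuinely different-looking product representations --- \eqref{exam5-wang}, \eqref{exam5-id}, and $(-q^3;q^3)_\infty$ times \eqref{exam10-id} --- shows how easily one lands on an equivalent but non-matching expression, so the computation must be organised so that the target form drops out rather than being reverse-engineered at the end.
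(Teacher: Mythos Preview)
Your Step 1 is correct, but Step 2 contains a genuine gap. The inner sum
$\sum_{j=0}^{\lfloor m/2\rfloor}\binom{m}{2j}_{q^3}(q^3;q^6)_j\,q^{3j}$
is not a terminating ${}_2\phi_1$: the argument $2j$ (rather than $j$) in the $q$-binomial coefficient puts it outside the reach of $q$-Chu--Vandermonde or any single-step summation, and small cases ($m=2$ gives $1+q^3-q^6$, $m=3$ gives $1+q^3-q^{12}$) show it does not close into a product. Your fallback suggestions --- ``recognise as a Bailey pair'' or ``run the constant term method on the outer sum'' --- are stated too schematically to carry the argument, and Step 3 cannot be assessed without an explicit theta output from Step 2. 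As written, this is an outline rather than a proof.

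The paper's route is different and avoids the problematic inner sum entirely. It does \emph{not} use \eqref{index-change} for Theorem \ref{thm-exam5}; instead it applies the constant-term method directly to the original two-variable series
$G(x,y)=\sum_{i,j}q^{i(i-1)/2-ij+2j^2}x^iy^j/(q^3;q^3)_i(q^3;q^3)_j$,
with $S_1=G(1,q^2)$ and $S_2=G(q^2,1)$. Writing $\tfrac{i^2}{2}-ij+2j^2=\tfrac{3j^2}{2}+\tfrac{(i-j)^2}{2}$ and inserting an auxiliary $z$ gives
$G(x,y)=\mathrm{CT}\,(qy/z;q^3)_\infty\,\theta(1/z;q)/(-xz;q^3)_\infty$.
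The key observation is that on the line $y=q^2/x$ (which contains both specialisations) the numerator combines with part of the denominator to form a \emph{second} theta function, so the integrand becomes $\theta(q^3/xz;q^3)\,\theta(1/z;q)/(q^3;q^3)_\infty(x^2z^2;q^6)_\infty$. Multiplying the two triple products, resumming diagonally, and extracting the constant term collapses $G(x,q^2/x)$ to a single sum $\sum_j q^{j(2j-1)}x^{2j}\theta(-q^{4-2j}/x;q^4)/(q^6;q^6)_j$. Splitting on the parity of $j$ and using \eqref{theta-period} reduces this to a pair of Rogers--Ramanujan-type sums with denominators $(q^6;q^6)_{2j}$ and $(q^6;q^6)_{2j+1}$; at $x=1$ and $x=q^2$ these are precisely the four classical sums \eqref{four-rogers} of Rogers, which deliver the product sides immediately --- no mod-$3$ dissection or quintuple-product gymnastics required.
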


Note that the right-hand sides in \eqref{exam5-id} look different from the expressions in
\eqref{exam5-wang} and \eqref{exam10-id}. However, since all these expressions are modular 
(up to multiplication  by a power of $q$) the corresponding identities can be proved automatically. 
We have done this using the Maple package described in \cite{Garvan}. Thus, the following result is a consequence of Theorem \ref{thm-exam-relation} and Theorem \ref{thm-exam5}.

\begin{theorem}\label{thm-conj}
The conjectured identities \eqref{exam5-wang} and the previously known identities \eqref{exam10-id}   hold.
\end{theorem}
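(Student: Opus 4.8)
The plan is to deduce Theorem~\ref{thm-conj} from Theorem~\ref{thm-exam-relation} and Theorem~\ref{thm-exam5} by a finite verification among modular forms, so that the only ``new'' content is the two product evaluations in \eqref{exam5-id} together with the bookkeeping needed to certify the comparison. First I would rewrite every factor appearing on the right-hand sides of \eqref{exam5-id} as an honest eta-quotient in the base $q$. The pieces $(q^4;q^4)_\infty$, $(q^6;q^{12})_\infty$, $(-q^2;q^4)_\infty$ and $(-q^4;q^4)_\infty$ are already eta-quotients, and the slightly exotic factors with a negative base, such as $(q^{12},-q^{18};-q^{30})_\infty$ and $(-q^6,q^{24};-q^{30})_\infty$, reduce to ordinary ones via identities of the shape $(a;-p)_\infty=(a;p^2)_\infty(-ap;p^2)_\infty$, while $(q^{24},q^{96};q^{120})_\infty$ and $(q^{48},q^{72};q^{120})_\infty$ combine with the remaining factors into generalized theta products. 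Note that the exponents $\tfrac12 i^2-ij+2j^2-\tfrac12 i+2j$ and $\tfrac12 i^2-ij+2j^2+\tfrac32 i$ are integers, so after clearing denominators each of \eqref{exam5-id-1}, \eqref{exam5-id-2} becomes a claimed identity $P(q)=Q(q)$ between $q$-series that, after multiplying by a suitable power of $q$, live in a common space of modular forms on a congruence subgroup $\Gamma_1(N)$ with $N$ a multiple of $\mathrm{lcm}(45,120,30)$.

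Second, with both sides identified as modular forms of a common weight on $\Gamma_1(N)$ (equivalently on $\Gamma_0(N)$ with an explicit Nebentypus), equality follows once the $q$-expansions agree up to the Sturm bound, that is, to order roughly $\tfrac{w}{12}[\mathrm{SL}_2(\mathbb{Z}):\Gamma_1(N)]$, after multiplying by enough powers of $\eta$ to make everything a holomorphic modular form of known integral (or half-integral) weight. This is exactly the kind of verification automated by the Maple package of \cite{Garvan}: one feeds it the conjectural theta/eta identity, it produces a provably correct level, weight and coefficient bound, and it checks the finitely many coefficients. Running this for RHS\eqref{exam5-id-1} $=$ RHS\eqref{exam5-1} and RHS\eqref{exam5-id-2} $=$ RHS\eqref{exam5-2} turns Theorem~\ref{thm-exam5} into \eqref{exam5-wang}, proving the first half of Theorem~\ref{thm-conj}.

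Third, for \eqref{exam10-id} I would combine Theorem~\ref{thm-exam-relation} with the product formulas just obtained. Since $S_3(q)=S_1(q)/(-q^3;q^3)_\infty$ and $S_4(q)=S_2(q)/(-q^3;q^3)_\infty$, and $(-q^3;q^3)_\infty=(q^6;q^6)_\infty/(q^3;q^3)_\infty$, dividing the right-hand sides of \eqref{exam5-id} (equivalently of \eqref{exam5-wang}) by this eta-quotient gives explicit product formulas for $S_3$ and $S_4$; one then checks by the same modular-forms argument that these coincide with the right-hand sides of \eqref{exam10-1} and \eqref{exam10-2}, once more a finite coefficient comparison handled by \cite{Garvan}. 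This completes Theorem~\ref{thm-conj}.

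The main obstacle is not the coefficient check, which is routine once automated, but the bookkeeping that makes it a proof: one must verify that each expression, after multiplication by an explicit power of $q$ and an explicit eta-quotient, is a \emph{holomorphic} modular form on a specified congruence subgroup, with the multiplier systems and characters matching on the two sides, so that the Sturm bound genuinely applies. The large level forced by the modulus $120$ in \eqref{exam5-id} makes the index, and hence the number of coefficients to be compared, sizeable but still finite and within reach; confirming holomorphy at every cusp and pinning down the correct multiplier is the step that requires real care.
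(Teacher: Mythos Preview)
Your proposal is correct and follows essentially the same route as the paper: deduce the identities from Theorems~\ref{thm-exam-relation} and~\ref{thm-exam5}, then certify the remaining equalities between the various product sides as theta/eta-function identities using the automated modular-forms verification of \cite{Garvan}. The paper states this in one sentence; your more detailed discussion of the Sturm-bound bookkeeping simply unpacks what that package does.
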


\section{Proofs of Theorems \ref{thm-general} and \ref{thm-exam-relation}}\label{sec-proof-relation}
We need Euler's $q$-exponential identities \cite[Corollary 2.2]{Andrews}
\begin{align}\label{Euler}
\sum_{n=0}^\infty \frac{z^n}{(q;q)_n}=\frac{1}{(z;q)_\infty}, \quad |z|<1,\qquad \sum_{n=0}^\infty \frac{q^{\binom{n}{2}} z^n}{(q;q)_n}=(-z;q)_\infty.
\end{align}
We introduce the multiplicative theta function
$$\theta(z;q):=(q,z,q/z;q)_\infty,$$
which satisfies the Jacobi triple product identity \cite[Theorem 2.8]{Andrews}
\begin{align}\label{Jacobi}
\theta(z;q)=\sum_{n=-\infty}^\infty (-1)^nq^{\binom{n}{2}}z^n
\end{align}
and the quasi-periodicity
\begin{equation}\label{theta-period}\theta(zq^k;q)=(-1)^kq^{-\frac{k(k-1)}2}z^{-k}\theta(z;q).\end{equation}
 For any series $f(z)=\sum_{n=-\infty}^\infty a(n)z^n$, we define the constant term extracting operator $\mathrm{CT}$ (with respect to $z$) as
$$\mathrm{CT} f(z)=a(0).$$

\begin{proof}[Proof of Theorem \ref{thm-general}]
We will use a special case of the $q$-Chu--Vandermonde summation formula,
written as in \cite[p.\ 20]{Andrews1984},
\begin{align}
\frac{1}{(q;q)_i(q;q)_j}=\sum_{k=0}^{\min(i,j)} \frac{q^{(i-k)(j-k)}}{(q;q)_k(q;q)_{i-k}(q;q)_{j-k}}. \label{Andrews-id}
\end{align}
It follows that
\begin{align*}
&\quad\sum_{i,j\geq 0} \frac{a_{i-j}q^{\frac{j(j-1)}{2}}x^j}{(q;q)_i(q;q)_j} 
=\sum_{i,j\geq 0} \sum_{k=0}^{\min(i,j)} \frac{a_{i-j}q^{\frac{j(j-1)}{2}+(i-k)(j-k)}x^j}{(q;q)_{i-k}(q;q)_{j-k}(q;q)_k}  \\
&=\sum_{i,j,k\geq 0} \frac{a_{i-j}q^{\frac{(j+k)(j+k-1)}{2}+ij}x^{j+k}}{(q;q)_i(q;q)_j(q;q)_k}
=\sum_{i,j\geq 0} \frac{a_{i-j}q^{\frac{j(j-1)}{2}+ij}x^{j}}{(q;q)_i(q;q)_j}\sum_{k=0}^\infty \frac{q^{\frac{k(k-1)}{2}+jk}x^{k}}{(q;q)_k}\\
&=(-x;q)_\infty \sum_{i,j\geq 0} \frac{a_{i-j}q^{\frac{j(j-1)}{2}+ij}x^{j}}{(q;q)_i(q,-x;q)_j},
\end{align*}
where we used \eqref{Euler} in the last step.
\end{proof}

Now we are able to prove the relations in Theorem \ref{thm-exam-relation}.
\begin{proof}[Proof of Theorem \ref{thm-exam-relation}]
It follows from \eqref{index-change} that
\begin{subequations}\label{S12-exp}
\begin{align}
S_1(q)&=\sum_{i,j\geq 0} \frac{q^{\frac{1}{2}i^2-ij+2j^2-\frac{1}{2}i+2j}}{(q^3;q^3)_i(q^3;q^3)_j}=(-q^3;q^3)_\infty \sum_{i,j\geq 0} \frac{q^{\frac{1}{2}i^2+2ij+2j^2-\frac{1}{2}i+2j}}{(q^3;q^3)_i(q^6;q^6)_j}, \label{S1-exp} \\
S_2(q)&=\sum_{i,j\geq 0} \frac{q^{\frac{1}{2}i^2-ij+2j^2+\frac{3}{2}i}}{(q^3;q^3)_i(q^3;q^3)_j} =(-q^3;q^3)_\infty \sum_{i,j\geq 0} \frac{q^{\frac{i^2}{2}+2ij+2j^2+\frac{3i}{2}}}{(q^3;q^3)_i(q^6;q^6)_j}. \label{S2-exp}
\end{align}
\end{subequations}
Next, we convert the double series in the right-hand sides to new forms using the constant term method. Consider the more general series
$$F(x,y):=\sum_{i,j\geq 0}\frac{q^{\frac 12(i+2j)(i+2j-1)}x^iy^j}{(q^3;q^3)_i(q^6;q^6)_j}.$$
By \eqref{Euler} and \eqref{Jacobi}, we can write
$$F(x,y)=\mathrm{CT}\sum_{i,j\geq 0}\sum_{n=-\infty}^\infty\frac{q^{\frac 12n(n-1)}x^iy^jz^{i+2j}(-1)^{i+n}}{(q^3;q^3)_i(q^6;q^6)_jz^n}
=\mathrm{CT}\frac{\theta(1/z;q)}{(-xz;q^3)(yz^2;q^6)_\infty}.
$$
In particular,
\begin{equation}\label{CT-S1} 
    \frac{S_1(q)}{(-q^3;q^3)_\infty }= F(1,q^3)=\mathrm{CT}\frac{\theta(1/z;q)}{(-z;q^3)_\infty(q^3z^2;q^6)_\infty}.
    \end{equation}
This can alternatively be written
\begin{align}\frac{S_1(q)}{(-q^3;q^3)_\infty }&=\mathrm{CT}\frac{(z;q^3)_\infty\theta(1/z;q)}{(z^2;q^3)_\infty}\nonumber\\
&=\mathrm{CT}\sum_{i,j\geq 0}\sum_{n=-\infty}^\infty\frac{(-1)^iq^{\frac 32i(i-1)}z^i}{(q^3;q^3)_i}\frac{z^{2j}}{(q^3;q^3)_j}\frac{(-1)^nq^{\frac{n(n-1)}2}}{z^n}\nonumber\\
&=\sum_{i,j\geq 0} \frac{q^{\frac{3}{2}i(i-1)+\frac{1}{2}(i+2j)(i+2j-1)}}{(q^3;q^3)_i (q^3;q^3)_j}\label{s1-alt},
\end{align}
which we recognize as the sum $S_3(q)$. 
In the same way,
\begin{align*}
\frac{S_2(q)}{(-q^3;q^3)_\infty }&=F(q^2,q)=\mathrm{CT}\frac{\theta(1/z;q)}{(-q^2z;q^3)_\infty(qz^2;q^6)_\infty}
=\mathrm{CT}\frac{(q^2z;q^3)_\infty\theta(1/z;q)}{(qz^2;q^3)_\infty}.
\end{align*}
Expanding this as in \eqref{s1-alt} we pick up an additional factor $q^{2i+j}$. That is, we arrive at the series $S_4(q)$. 
\end{proof}

\section{Proof of Theorem \ref{thm-exam5}}
The proof of Theorem \ref{thm-exam5} will be based on the four identities
\begin{subequations}
\label{four-rogers}
\begin{align}
\sum_{n=0}^\infty \frac{q^{n^2}}{(q;q)_{2n}}&=\frac{1}{(q;q^2)_\infty (q^4,q^{16};q^{20})_\infty},   \\
\sum_{n=0}^\infty \frac{q^{n(n+1)}}{(q;q)_{2n}}&=\frac{1}{(q;q^2)_\infty (-q,q^4;-q^5)_\infty},  \\
\sum_{n=0}^\infty \frac{q^{n(n+1)}}{(q;q)_{2n+1}}&=\frac{1}{(q;q^2)_\infty (q^2,-q^3;-q^5)_\infty},  \\
\sum_{n=0}^\infty \frac{q^{n(n+2)}}{(q;q)_{2n+1}}&=\frac{1}{(q;q^2)_\infty (q^8,q^{12};q^{20})_\infty}, 
\end{align}
\end{subequations}
which were obtained by Rogers in the same paper as \eqref{RR}  \cite[pp.\ 331--332]{Rogers1894}.

We observe that $S_1(q)=G(1,q^2)$ and $S_2(q)=G(q^{2},1)$, where
$$G(x,y):=\sum_{i,j=0}^\infty \frac{q^{\frac{i(i-1)}2-ij+2j^2}x^iy^j}{(q^3;q^3)_i(q^3;q^3)_j}. $$
 By the following result, these double series
 can both be reduced to single series.

\begin{lemma}
We have the reduction formula
\begin{multline}\label{ex5-reduction}(q^3;q^3)_\infty G(x,q^2/x)\\
=
\theta(-x;q^4)\sum_{j=0}^\infty\frac{q^{6j^2}x^{3j}}{(q^6;q^6)_{2j}}
+qx^2\theta(-q^{2}x;q^4)
\sum_{j=0}^\infty\frac{q^{6j(j+1)}x^{3j}}{(q^6;q^6)_{2j+1}}. \end{multline}
\end{lemma}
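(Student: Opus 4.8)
The plan is to prove the reduction formula \eqref{ex5-reduction} by the same constant term technique used in the proof of Theorem \ref{thm-exam-relation}, now applied to the two-variable generating function $G(x,y)$ with $y=q^2/x$. First I would rewrite the quadratic form $\tfrac{i(i-1)}{2}-ij+2j^2$ in the exponent of $G(x,q^2/x)$ in a way that decouples into a square plus a manageable cross term: completing the square in $i$ suggests writing $\tfrac12(i-2j)(i-2j-1) + (\text{linear in }j)$, since $\tfrac12(i-2j)(i-2j-1)=\tfrac{i(i-1)}{2}-ij+2j^2-j+\text{(terms with }j\text{)}$, which I would check carefully to absorb the $y^j=q^{2j}x^{-j}$ factor cleanly. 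The upshot should be that, after the substitution, the $i$-sum is of the shape $\sum_i q^{\binom{i-2j}{2}}(\cdots)^i/(q^3;q^3)_i$, which by the second Euler identity in \eqref{Euler} (or by inserting a theta function via \eqref{Jacobi} as in the $F(x,y)$ computation) collapses.

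Concretely, I would introduce an auxiliary variable $z$ and write $G(x,q^2/x)=\mathrm{CT}\,[\text{(theta factor)}/((q^3\text{-Pochhammer in }z)\cdot(\cdots))]$, exactly paralleling the formula $F(x,y)=\mathrm{CT}\,\theta(1/z;q)/((-xz;q^3)_\infty(yz^2;q^6)_\infty)$. The key difference is that the substitution $y=q^2/x$ should make the two Pochhammer denominators combine so that one can expand the reciprocal theta-type quotient into a \emph{single} sum over $j$ (rather than a genuine double sum), with the residual $z$-dependence being a pure theta function that gets evaluated by the Jacobi triple product \eqref{Jacobi}. At that stage the sum over $j$ naturally splits according to the parity of the remaining summation index — the $(q^6;q^6)_{2j}$ versus $(q^6;q^6)_{2j+1}$ denominators in the statement are the tell — and each parity class contributes one of the two terms on the right-hand side of \eqref{ex5-reduction}, with the theta prefactors $\theta(-x;q^4)$ and $qx^2\theta(-q^2x;q^4)$ emerging from the triple-product evaluation together with the quasi-periodicity relation \eqref{theta-period} to shift arguments.

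I would organize the computation as: (1) perform the exponent bookkeeping to put $G(x,q^2/x)$ in the decoupled form; (2) apply the constant term representation and carry out the $i$-summation via Euler; (3) split the surviving single sum by parity of the index, extract constant terms, and apply Jacobi triple product plus \eqref{theta-period} to identify the theta prefactors; (4) collect the two pieces and match against the claimed right-hand side, noting that the bases $q^4$ and $q^6$ appearing there are forced by the $q^3$-scaling inherent in $G$ (so that $(q^3;q^3)_i$ with $i$ in a fixed residue class modulo $2$ produces $(q^6;q^6)$-Pochhammers). The main obstacle I anticipate is step (1): getting the exponent algebra exactly right so that the $x$-powers, the $q$-powers, and the theta shifts all line up — in particular verifying that after completing the square the leftover linear-in-$j$ exponent is precisely $6j^2$ (resp.\ $6j(j+1)$) in each parity class, and that the parity split produces exactly the arguments $-x$ and $-q^2x$ in the theta functions rather than some twisted variants. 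Once the bookkeeping is pinned down, steps (2)--(4) are routine applications of \eqref{Euler}, \eqref{Jacobi}, and \eqref{theta-period}.
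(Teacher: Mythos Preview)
Your overall strategy---constant term representation, specialization $y=q^2/x$, reduction to a single sum, parity split with quasi-periodicity---matches the paper's. But two concrete points in your plan are off and would block the computation as written.

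First, the completion of the square. You propose $\tfrac12(i-2j)(i-2j-1)$, but
\[
\frac{i(i-1)}{2}-ij+2j^2-\frac{(i-2j)(i-2j-1)}{2}=ij-j,
\]
so a genuine cross term survives and the sums do not decouple. The correct decoupling is the one the paper uses,
\[
\frac{i^2}{2}-ij+2j^2=\frac{(i-j)^2}{2}+\frac{3j^2}{2},
\]
which gives
\[
G(x,y)=\mathrm{CT}\,\frac{(qy/z;q^3)_\infty\,\theta(1/z;q)}{(-xz;q^3)_\infty}.
\]
Note this is \emph{not} parallel to the $F(x,y)$ formula: the $j$-sum, being $\sum_j q^{3j(j-1)/2}(\cdot)^j/(q^3;q^3)_j$, lands as a \emph{numerator} Pochhammer via the second Euler identity, not a denominator. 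So your picture of ``two Pochhammer denominators combining'' does not materialize.

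Second, and more importantly, the mechanism by which the $\theta(\,\cdot\,;q^4)$ prefactors arise is one you have not identified. At $y=q^2/x$ the numerator $(q^3/xz;q^3)_\infty$ and denominator $(-xz;q^3)_\infty$ merge into a \emph{second} theta function:
\[
G(x,q^2/x)=\mathrm{CT}\,\frac{\theta(q^3/xz;q^3)\,\theta(1/z;q)}{(q^3;q^3)_\infty\,(x^2z^2;q^6)_\infty}.
\]
The crucial step is then to expand the product $\theta(q^3/xz;q^3)\theta(1/z;q)$ as a double bilateral sum and resum along diagonals $k+l=m$; the inner sum is itself a Jacobi triple product and yields coefficients $\theta(-q^{4-m}/x;q^4)$. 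Only after this does the Euler expansion of $1/(x^2z^2;q^6)_\infty$ and constant term extraction give the single sum
\[
\sum_{j\ge 0}\frac{q^{j(2j-1)}x^{2j}}{(q^6;q^6)_j}\,\theta(-q^{4-2j}/x;q^4),
\]
which you then split by parity and simplify with \eqref{theta-period}. Your sketch jumps from the CT representation to ``residual $z$-dependence being a pure theta function'' without this product-of-two-thetas step, and that is the step that actually produces the base-$q^4$ theta factors in the answer.
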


\begin{proof}
Since
$$ \frac{i^2}2-ij+2j^2=\frac{3j^2}2+\frac{(i-j)^2}2,$$
we can write
$$G(x,y)=\mathrm{CT}\sum_{i,j=0}^\infty\sum_{k=-\infty}^\infty
 \frac{q^{-\frac{i}2+\frac{3j^2}2+\frac{k^2}2}x^iy^jz^{i-j-k}}{(q^3;q^3)_i(q^3;q^3)_j}.
 $$
We multiply by $(-1)^{i+j+k}q^{(i-j-k)/2}$ before summing the series, which does not change the constant term. This gives
\begin{align*}G(x,y)&=\mathrm{CT} \sum_{i=0}^{\infty}
\frac{(-xz)^i}{(q^3;q^3)_i}
\sum_{j=0}^\infty
\frac{q^{\frac{3j(j-1)}2}(-qy/z)^j}{(q^3;q^3)_j}
\sum_{k=-\infty}^\infty
(-1)^kq^{\frac{k(k-1)}2}z^{-k}\\
&=\mathrm{CT}\frac{(qy/z;q^3)_\infty\theta(1/z;q)}{(-xz;q^3)_\infty},
\end{align*}
where we used \eqref{Euler} and \eqref{Jacobi}.
When $y=q^{2}/x$, this can be written
$$\mathrm{CT}\frac{(q^3/xz;q^3)_\infty\theta(1/z;q)}{(-xz;q^3)_\infty}=\mathrm{CT}\frac{\theta(q^3/xz;q^3)_\infty\theta(1/z;q)}{(q^3;q^3)_\infty(x^2z^2;q^6)_\infty}. $$
The numerator has the Laurent expansion
\begin{align*}\theta(q^3/xz;q^3)_\infty\theta(1/z;q)
&=\sum_{k=-\infty}^\infty (-1)^kq^{\frac{3k(k+1)}{2}}(xz)^{-k}
\sum_{l=-\infty}^\infty(-1)^lq^{\frac{l(l-1)}2}z^{-l}\\
&=\sum_{m=-\infty}^\infty(-1)^mz^{-m}q^{\frac{m(m-1)}2}\sum_{k=-\infty}^\infty q^{k(2k+2-m)}x^{-k}\\
&=\sum_{m=-\infty}^\infty(-1)^mz^{-m}q^{\frac{m(m-1)}2}
\theta(-q^{4-m}/x;q^4),
\end{align*}
where we substituted $l=m-k$ and again used \eqref{Jacobi}.
It follows that
\begin{align*}G(x,q^2/x)&=\frac 1{(q^3;q^3)_\infty}
\mathrm{CT}\sum_{j=0}^\infty\frac{(xz)^{2j}}{(q^6;q^6)_j}
\sum_{m=-\infty}^\infty(-1)^mz^{-m}q^{\frac{m(m-1)}2}
\theta(-q^{4-m}/x;q^4)\\
&=\frac 1{(q^3;q^3)_\infty}\sum_{j=0}^\infty\frac{q^{j(2j-1)}x^{2j}}{(q^6;q^6)_j}
\theta(-q^{4-2j}/x;q^4).\end{align*}
We split  the sum into terms with even and odd $j$. Using also  \eqref{theta-period}, the first term can be written
$$ \sum_{j=0}^\infty\frac{q^{2j(4j-1)}x^{4j}}{(q^6;q^6)_{2j}}
\theta(-q^{4-4j}/x;q^4)
=\theta(-q^{4}/x;q^4)\sum_{j=0}^\infty\frac{q^{6j^2}x^{3j}}{(q^6;q^6)_{2j}}.
$$
In the same way, the second term is
$$
\sum_{j=0}^\infty\frac{q^{(2j+1)(4j+1)}x^{4j+2}}{(q^6;q^6)_{2j+1}}
\theta(-q^{2-4j}/x;q^4)
=qx^2\theta(-q^{2}/x;q^4)
\sum_{j=0}^\infty\frac{q^{6j(j+1)}x^{3j}}{(q^6;q^6)_{2j+1}}. \qedhere
$$
\end{proof}

\begin{proof}[Proof of Theorem \ref{thm-exam5}]
In the special cases $x=1$ and $x=q^2$, \eqref{ex5-reduction} reduces to
$$G(1,q^2)=\frac{(q^4;q^4)_\infty}{(q^3;q^3)_\infty}\left(
2(-q^4;q^4)_\infty^2\sum_{j=0}^\infty\frac{q^{6j^2}}{(q^6;q^6)_{2j}}
+q(-q^{2};q^4)_\infty^2
\sum_{j=0}^\infty\frac{q^{6j(j+1)}}{(q^6;q^6)_{2j+1}}
\right),  $$
$$G(q^2,1)=\frac {(q^4;q^4)_\infty}{(q^3;q^3)_\infty}\left(
(-q^2;q^4)_\infty^2\sum_{j=0}^\infty\frac{q^{6j(j+1)}}{(q^6;q^6)_{2j}}
+2q^5(-q^{4};q^4)_\infty^2
\sum_{j=0}^\infty\frac{q^{6j(j+2)}}{(q^6;q^6)_{2j+1}}
\right). $$
We observe that the four series can be summed by Rogers' identities \eqref{four-rogers}. This leads after simplification to
Theorem \ref{thm-exam5}.
\end{proof}

\section{Some Byproducts and Concluding Remarks}
We give several remarks before closing this paper.

As we have seen in the proof of Theorem \ref{thm-exam-relation}, we may have different double series representations for essentially the same Nahm sum. 
 There is in fact another pair of double sum representations for the
 Nahm sums in Example 5. We can write \eqref{CT-S1} as
 \begin{subequations}\label{S12-new}
\begin{align}
\frac{S_1(q)}{(-q^3;q^3)_\infty}=\mathrm{CT} \frac{\theta(1/z;q)_\infty}{(-z;q^{3/2})_\infty (q^{3/2}z;q^3)_\infty} =\sum_{i,j\geq 0} \frac{(-1)^jq^{\frac{(i+j)^2}{2}-\frac{i}{2}+j}}{(q^{3/2};q^{3/2})_i (q^3;q^3)_j}. \label{S1-new}
\end{align}
In the same way, 
\begin{align}
\frac{S_2(q)}{(-q^3;q^3)_\infty} =\mathrm{CT} \frac{\theta(1/z;q)}{(-q^{1/2}z;q^{3/2})_\infty (q^{1/2}z;q^3)_\infty} 
=\sum_{i,j\geq 0} \frac{(-1)^jq^{\frac{(i+j)^2}{2}}}{(q^{3/2};q^{3/2})_i(q^3;q^3)_j}. \label{S2-new}
\end{align}
\end{subequations}
Note that Theorem \ref{thm-exam5} does not only evaluate the Nahm sums of Example 5 but also the alternative double sums on the right-hand sides of 
\eqref{S12-exp} and \eqref{S12-new}.

In the proof of Theorem \ref{thm-exam-relation}, we started with the double series in Example 5. If we instead start with Example 10, we find an interesting byproduct. Let
\begin{align}
H(x,y):=\sum_{i,j\geq 0} \frac{q^{2i^2+2ij+2j^2}x^iy^j}{(q^3;q^3)_i(q^3;q^3)_j}.
\end{align}
Writing $2i^2+2ij+2j^2=(2i+j)^2/2+3j^2/2$, we find that
\begin{align}
H(x,y)&=\mathrm{CT}\sum_{i=0}^\infty  \frac{q^ix^iz^{2i}}{(q^3;q^3)_i}  \sum_{j=0}^\infty \frac{(-1)^jq^{j(3j+1)/2}y^jz^j}{(q^3;q^3)_j} 
\sum_{k=-\infty}^\infty (-1)^kq^{k(k-1)/2}z^{-k}  \nonumber \\
&=\mathrm{CT}  \frac{(yzq^{2};q^3)_\infty \theta(1/z;q)}{(qxz^2;q^3)_\infty}.
\end{align}
This expression breaks the symmetry between $x$ and $y$. The series $S_3(q)$ can be expressed either as
$H(q^{-2},q^{-1})$ or as $H(q^{-1},q^{-2})$. The first choice leads to the double series in the right-hand side of \eqref{S1-exp},
but with the second choice we obtain a slightly different result. Namely,
\begin{align}
H(q^{-1},q^{-2})&=\mathrm{CT} \frac{(qz;q^3)_\infty \theta(1/z;q)}{ (q^{-1}z^2;q^3)_\infty } 
=\mathrm{CT} \frac{\theta(1/z;q)}{(-qz;q^3)_\infty (q^{-1}z^2;q^6)_\infty} \nonumber \\
&=\mathrm{CT}\sum_{i=0}^\infty \frac{(-1)^iq^iz^i}{(q^3;q^3)_i} \sum_{j=0}^\infty \frac{q^{-j}z^{2j}}{(q^6;q^6)_j} \sum_{k=-\infty}^\infty (-1)^kq^{k(k-1)/2}z^{-k} \nonumber \\
&=\sum_{i,j\geq 0} \frac{q^{\frac{1}{2}i^2+2ij+2j^2+\frac{1}{2}i-2j}}{(q^3;q^3)_i(q^6;q^6)_j}. \label{10-1}
\end{align}
Compared to the series in the right-hand side of \eqref{S1-exp}, the only difference is that $(i,j)$ is replaced by $(-i,-j)$ in the linear term in the exponent of $q$. This leads to the following interesting consequence.
\begin{corollary}\label{cor-id}
We have
\begin{align}\label{equal}
\sum_{i,j\geq 0} \frac{q^{\frac{1}{2}i^2+2ij+2j^2-\frac{1}{2}i+2j}}{(q^3;q^3)_i(q^6;q^6)_j}=\sum_{i,j\geq 0} \frac{q^{\frac{1}{2}i^2+2ij+2j^2+\frac{1}{2}i-2j}}{(q^3;q^3)_i(q^6;q^6)_j}.
\end{align}
\end{corollary}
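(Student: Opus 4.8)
The plan is to recognize that both sides of \eqref{equal} are equal to the Nahm sum $S_3(q)$ of Example 10, so that the corollary becomes an immediate consequence of Theorem \ref{thm-exam-relation} together with the constant-term computations already carried out above; no genuinely new work is needed.

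First I would handle the left-hand side. By the expansion \eqref{S1-exp}, the double sum on the left of \eqref{equal} is precisely $S_1(q)/(-q^3;q^3)_\infty$, and by the first relation in \eqref{relation-1} this equals $S_3(q)$.

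Next I would handle the right-hand side. By the computation \eqref{10-1}, the double sum on the right of \eqref{equal} equals $H(q^{-1},q^{-2})$. Since the quadratic form $2i^2+2ij+2j^2$ in the definition of $H(x,y)$ is symmetric under interchanging $i$ and $j$, relabelling the summation indices gives $H(q^{-1},q^{-2})=H(q^{-2},q^{-1})$; and directly from the definitions of $H$ and of $S_3$ one has $H(q^{-2},q^{-1})=S_3(q)$. Hence the right-hand side of \eqref{equal} also equals $S_3(q)$, and the corollary follows by comparing with the previous paragraph.

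There is essentially no serious obstacle: every ingredient — the expansion \eqref{S1-exp}, the relation \eqref{relation-1}, and the constant-term evaluation \eqref{10-1} — is already in hand, and the only point needing a word of care is the symmetry step identifying $H(q^{-1},q^{-2})$ with $S_3(q)$, which is immediate once one observes the invariance of the exponent $2i^2+2ij+2j^2$ under $i\leftrightarrow j$.
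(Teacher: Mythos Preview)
Your proposal is correct and matches the paper's own derivation: both sides of \eqref{equal} are identified with $S_3(q)$ by combining the symmetry $H(x,y)=H(y,x)$ with the constant-term computations already carried out (namely \eqref{S1-exp}/\eqref{relation-1} for the left side and \eqref{10-1} for the right). This is exactly how the corollary is obtained in the paper; the only difference is cosmetic, in that the paper packages the left-hand identification as ``the first choice of $H$ leads to the right-hand side of \eqref{S1-exp}'' rather than invoking Theorem~\ref{thm-exam-relation} explicitly.
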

We end this paper with the natural question: can one prove this corollary directly?

\subsection*{Acknowledgements}
This work was supported by the National Natural Science Foundation of China (12171375) and the Swedish Science Research Council (2020-04221).

\end{document}